\newtheorem{theorem}{Theorem}
\newtheorem{proposition}[theorem]{Proposition}
\newtheorem{corollary}[theorem]{Corollary}
\theoremstyle{remark}
\newtheorem{remark}{Remark}
\newcommand{\sign}{\mathop{\mathrm{sign}}\nolimits}
\newcommand{\lk}{\mathop{\mathrm{lk}}\nolimits}
\begin{document}

\title[Cosmetic surgeries on knots in homology spheres]{Cosmetic surgeries on knots in homology spheres and the Casson--Walker invariant}

\author{Kazuhiro Ichihara}
\address{Department of Mathematics, College of Humanities and Sciences, Nihon University, 3-25-40 Sakurajosui, Setagaya-ku, Tokyo 156-8550, JAPAN}
\email{ichihara.kazuhiro@nihon-u.ac.jp}

\author{In Dae Jong}
\address{Department of Mathematics, Kindai University, 3-4-1 Kowakae, Higashiosaka City, Osaka 577-0818, Japan} 
\email{jong@math.kindai.ac.jp}

%\dedicatory{Dedicated to Professor xxx on his 60th birthday.}

\date{\today}

\subjclass[2020]{Primary 57K30, Secondary 57K10}

\keywords{cosmetic surgery, homology $3$-sphere, Casson--Walker invariant, Casson--Gordon invariant, rational surgery formula}

\thanks{This work was supported by JSPS KAKENHI Grant Numbers JP22K03301 and JP22K03324.}

\begin{abstract}
We study cosmetic surgeries on a knot in a homology sphere. 
Several constraints on knots and surgery slopes to admit such surgeries are given. Our main ingredient is the rational surgery formula of the Casson--Walker invariant for 2-component links in the 3-sphere. 
\end{abstract}

\maketitle

\section{Introduction}

One of current active research problem in Dehn surgery theory is the cosmetic surgery problem, which asks under what circumstances two distinct Dehn surgeries on the same knot yield homeomorphic 3-manifolds.
A pair of such surgeries is referred to as \textit{cosmetic surgeries}.
More specifically, they are called \textit{purely cosmetic} if the resulting manifolds are homeomorphic via an orientation-preserving map, and \textit{chirally cosmetic} if the homeomorphism reverses orientation.
It is conjectured that no nontrivial knot in a closed, oriented 3-manifold admits purely cosmetic surgeries along inequivalent slopes.
This is known as the \textit{purely cosmetic surgery conjecture}.
For further details, see \cite{BleilerHodgsonWeeks} and \cite[Problem 1.81A]{Kirby}.

In this paper, we study cosmetic surgeries on knots in homology spheres.
We first focus on the purely cosmetic case.
For a knot in an integral homology sphere, the following result holds as a corollary of a theorem by Boyer and Lines~\cite[Theorem 2.8]{BoyerLines}.

\begin{proposition}\label{prop:BL1}
   For any knot in an integral homology sphere, there are at most two pairs of integral purely cosmetic surgeries. 
\end{proposition}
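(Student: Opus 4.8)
The plan is to combine the homological constraint with the orientation sensitivity of the Casson--Walker invariant $\lambda$. First I would note that $n$-surgery on a knot $K$ in an integral homology sphere $\Sigma$ yields a manifold $\Sigma_K(n)$ whose first homology group has order $|n|$, since $K$ is null-homologous and the Seifert longitude is killed. Hence if two distinct integral slopes $n_1 \ne n_2$ give purely cosmetic surgeries, equality of the orders of first homology forces $|n_1| = |n_2|$, so that $n_2 = -n_1$. Thus every pair of integral purely cosmetic surgeries necessarily has the form $\{n,-n\}$ for some integer $n>0$, and it suffices to bound the number of admissible $n$.

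Next I would use that $\lambda$ is invariant under orientation-preserving homeomorphism. A purely cosmetic pair $\{n,-n\}$ therefore satisfies $\lambda(\Sigma_K(n)) = \lambda(\Sigma_K(-n))$. Into both sides I would substitute the rational surgery formula of \cite[Theorem 2.8]{BoyerLines}, which expresses $\lambda(\Sigma_K(p/q))$ as $\lambda(\Sigma)$ plus a term proportional to $\tfrac{q}{p}\Delta_K''(1)$ together with a Dedekind-sum term $s(q,p)$. Writing the slope $-n$ as $n/(-1)$ and using the elementary identity $s(-1,n) = -s(1,n)$, the two copies of $\lambda(\Sigma)$ cancel and the equation collapses (up to the fixed normalization constants in the formula) to the single scalar identity $\Delta_K''(1)/n = s(1,n)$.

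Finally, since $s(1,n) = (n-1)(n-2)/(12n)$, clearing the common denominator turns this into a quadratic equation in $n$, namely $(n-1)(n-2) = 12\,\Delta_K''(1)$ up to normalization. For a fixed knot $K$ the right-hand side is a constant, so this degree-two equation has at most two real roots, in particular at most two positive integer solutions. Each solution $n>0$ corresponds to exactly one pair $\{n,-n\}$, which yields the bound of at most two pairs. As a sanity check, the two positive roots sum to $3$, so they can simultaneously be positive integers only when $\Delta_K''(1)=0$, in which case they are precisely $n=1,2$.

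The main obstacle I anticipate is bookkeeping rather than conceptual: pinning down the precise normalization of $\lambda$ and the exact coefficients and sign of the Dedekind-sum term in the Boyer--Lines formula, and in particular handling the negative slope consistently (via $s(-1,n)=-s(1,n)$, or equivalently via $\lambda(-M)=-\lambda(M)$). Once those constants are fixed, the reduction to a degree-two equation, and hence the bound of two, is forced.
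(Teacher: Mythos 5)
Your proof is correct, and it rests on the same two pillars as the paper's argument: the homological reduction of any integral purely cosmetic pair to the form $\{n,-n\}$, and the Boyer--Lines surgery formula for the Casson invariant combined with the explicit value $s(1,n)=(n-1)(n-2)/(12n)$. The organization differs in one genuine respect. The paper first invokes Proposition~\ref{prop:BL2} (the Boyer--Lines criterion that $\Delta_K''(1)\ne 0$ obstructs purely cosmetic surgeries) to kill the Alexander-polynomial term, and only then solves $s(1,p)=0$, pinning down $p\in\{1,2\}$. You keep $\Delta_K''(1)$ in play and let the equality $\lambda(\Sigma_K(n))=\lambda(\Sigma_K(-n))$ collapse directly to the quadratic $(n-1)(n-2)=12\,\Delta_K''(1)$, which has at most two positive roots; this makes your argument self-contained, needing no external vanishing result, while the paper's version identifies the candidate slopes exactly. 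Your closing observation (the roots sum to $3$) in fact recovers the paper's sharper conclusion: two positive integer solutions are possible only when $\Delta_K''(1)=0$, in which case they are $1$ and $2$. One further point in favor of your bookkeeping: treating the negative slope as $n/(-1)$ and using the antisymmetry $s(-1,n)=-s(1,n)$ is the clean way to run the computation; the paper instead writes the slope as $-p/1$ and asserts $s(1,-p)=-s(1,p)$, which under its own definition of the Dedekind sum has a sign slip (one actually has $s(1,-p)=s(1,p)$, and it is this symmetry, fed through the $\sign(p)$ factor in the surgery formula, that yields $2s(1,p)=0$). So the normalization issue you flagged as the main obstacle is real, and you resolved it correctly.
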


Here Dehn surgery is said to be \emph{integral} if the algebraic intersection number of the meridian and a representative of the surgery slope is $\pm 1$. 
The proof of Proposition~\ref{prop:BL1} will be given later. 
The following result extends Proposition~\ref{prop:BL1} to knots in rational homology spheres obtained by Dehn surgery on knots in the 3-sphere $S^3$. 

\begin{theorem}\label{thm:rational}
    Let $K$ be a null-homologous knot in a rational homology sphere obtained by Dehn surgery on a knot in $S^3$. 
     Then $K$ admits at most two pairs of integral purely cosmetic surgeries. 
\end{theorem}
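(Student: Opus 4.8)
The plan is to convert every integral surgery on $K$ into a surgery on a $2$-component link in $S^{3}$ and then to read the Casson--Walker invariant $\lambda$ off the rational surgery formula advertised in the abstract. Write $Y=S^{3}_{\gamma}(K')$ for the defining slope $\gamma=p'/q'$ on the knot $K'\subset S^{3}$, let $L=K\cup K'$ be the resulting $2$-component link, and set $\ell=\lk(K,K')$. Since $K$ is null-homologous in $Y$ it has a preferred Seifert longitude, and the associated integral framing differs from the integral framing of $K\subset S^{3}$ by a meridional correction $c=c(\ell,p',q')$ that is produced by the surgery on $K'$ and is proportional to $\ell^{2}$. Hence $n$-surgery on $K\subset Y$ is the rational surgery of slope $n+c$ on $K\subset S^{3}$, and
\[
Y_{n}(K)\ \cong\ S^{3}_{(n+c,\,p'/q')}(L),
\]
so that $\lambda\big(Y_{n}(K)\big)$ is computed by the $2$-component link formula with the $K'$-coefficient held fixed.

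First I would substitute these coefficients into the link formula to obtain $\lambda\big(Y_{n}(K)\big)$ as an explicit function $f(n)=\lambda(Y)+g(n)$. By analogy with the knot case I expect the ``$K$-part'' $g(n)$ to consist of a term decaying like $a/(np'+\alpha)$, whose numerator $a$ records the relevant Alexander and linking data of $L$, together with a Dedekind-sum term $s\big(p',\,np'+\alpha\big)$. Reciprocity rewrites this Dedekind sum as $\tfrac{n}{12}$ plus an $n$-independent constant plus a term of order $1/n$, because the reciprocal summand $s(np'+\alpha,\,p')=s(\alpha,p')$ is constant in $n$. Consequently, after multiplying through by $n(np'+\alpha)$, the function $g$ is governed by a polynomial in $n$ of degree at most two.

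Next I would impose the cosmetic constraints. Because $K$ is null-homologous, $|H_{1}(Y_{n}(K))|=|n|\cdot|H_{1}(Y)|$, so an orientation-preserving homeomorphism $Y_{n}(K)\cong Y_{m}(K)$ forces $|n|=|m|$ and hence $m=-n$. The Casson--Walker invariant reverses sign under orientation reversal, $\lambda(-M)=-\lambda(M)$, and $Y_{-n}(K)=-\big((-Y)_{n}(K)\big)$; since the data entering $a$ and $c$ are insensitive to reversing the ambient orientation, this yields $\lambda\big(Y_{-n}(K)\big)=\lambda(Y)-g(n)$. The equality $\lambda(Y_{n}(K))=\lambda(Y_{-n}(K))$ therefore collapses to the single equation $g(n)=0$, which by the previous step is, after clearing denominators, a polynomial equation of degree at most two in $n$; in the knot-in-homology-sphere model it is exactly $(n-1)(n-2)=24a$. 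Such an equation has at most two roots, so at most two positive integers $n$ can solve it, and since the pair $\{n,-n\}$ is determined by $n$ we conclude that $K$ admits at most two pairs of integral purely cosmetic surgeries, exactly as in the homology-sphere case recorded in Proposition~\ref{prop:BL1}.

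The main obstacle is the explicit evaluation in the middle step: one must track the framing correction $c$ and the Dedekind contribution through the $2$-component link formula and verify that the cosmetic condition really reduces to degree at most two, rather than to a higher-degree or transcendental relation. A related delicate point is the orientation-reversal identity, since $-Y$ corresponds to the mirror link $\overline{L}$, for which $\lk$ changes sign and the Alexander polynomial is conjugated; here one needs that the combinations actually entering $g$---essentially $\ell^{2}$ and the second derivative of the Alexander polynomial at $1$---are mirror-invariant, so that the sign-reversal argument goes through and the even-in-$n$ part cancels cleanly. Should the leading coefficient of the resulting polynomial degenerate, that case would require a separate direct comparison of the remaining bounded terms.
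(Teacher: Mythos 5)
Your overall strategy---convert integral surgeries on $K\subset Y$ into surgeries on the two-component link $L=K\cup K'$ in $S^3$, feed them into Ito's rational surgery formula for the Casson--Walker invariant, and extract a low-degree polynomial equation in the slope---is exactly the paper's strategy. But there is a genuine gap at the very first step: you keep $\ell=\lk(K,K')$ arbitrary and compensate with a framing correction $c\propto\ell^2$, whereas the paper first uses the null-homology hypothesis \emph{geometrically}: since $K$ is null-homologous in $\Sigma$, one can isotope $K$ inside $\Sigma$ so that it bounds a surface in the exterior of the dual knot of $K_0$, and hence is represented by a knot in $S^3$ with $\ell=0$. This reduction is not cosmetic bookkeeping; it is what makes the final equation quadratic. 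With $\ell=0$ the linking matrix is diagonal, its determinant $D(n)$ is proportional to $n$, and clearing denominators in the equation $\lambda_w(Y_n)=\lambda_w(Y_{-n})$ yields the paper's quadratic $p^2-\tfrac{3(\varsigma-\varsigma')}{2}p+\bigl(2-24a_2(K)+\tfrac{24q_0}{p_0}a_3(L)\bigr)=0$. In your setup, $D(n)=(n+c)\tfrac{p'}{q'}-\ell^2$ with $c=-q'\ell^2/p'$, so $D(n)D(-n)=u^2-n^2v^2$ with $u=-2\ell^2\neq 0$, $v=p'/q'$; carrying out the difference of the two evaluations of Ito's formula and multiplying by $D(n)D(-n)$ produces a genuine \emph{cubic} in $n$ with nonzero leading coefficient $v^2/12$ and nonzero constant term $\tfrac{\varsigma-\varsigma'}{8}u^2$. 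A cubic allows three positive roots, so your argument as outlined proves ``at most three pairs,'' not the claimed ``at most two.'' Your own caveat (``verify that the cosmetic condition really reduces to degree at most two'') flags precisely the point that fails without the $\ell=0$ reduction.

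A second, independent problem is your orientation-reversal step. You obtain $\lambda(Y_{-n}(K))=\lambda(Y)-g(n)$ by passing to $-Y$ and asserting that the data entering the formula are mirror-invariant. They are not: $a_3(L)$ (equivalently $v_3(L)$), which does enter the ``$K$-part,'' changes sign under mirroring, and the signature term $\varsigma/8$ is not odd in $n$ (for $p>0$ one has $\varsigma-\varsigma'=2$, never $\varsigma'=-\varsigma$). The paper avoids this entirely: there is no need for any mirror argument in the purely cosmetic case, since $\lambda_w(Y_{-n}(K))$ is computed by simply evaluating the same surgery formula at the slope $-n$, and the constraint is the vanishing of the \emph{difference} of the two evaluations. (Orientation reversal is used only in the chirally cosmetic Theorem~\ref{thm:chirally}, where the relevant quantity is the \emph{sum}, and there the sign changes you wave away are exactly what make the $a_3$-terms cancel.) To repair your proof, insert the isotopy making $\ell=0$ at the outset, drop the mirror argument, and compute the difference directly.
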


\begin{remark}
Recent studies on cosmetic surgeries on knots in $S^3$ have made extensive use of Heegaard Floer homology. 
As a recent culmination of this approach, it has been shown that there is at most one pair of purely cosmetic surgeries on any nontrivial knot in $S^3$ (directly as a corollary of \cite[Corollary 1.4]{daemi2024filteredinstantonhomologycosmetic}). 
Moreover, for several classes of knots in $S^3$, the purely cosmetic surgery conjecture has been confirmed affirmatively.
See \cite{daemi2024filteredinstantonhomologycosmetic,IchiharaJongMattmanSaito,KotelskiyLidmanMooreWatsonZibrowius,StipsiczSzabo,TaoComposite,Varvarezos21} for examples.
\end{remark}

\begin{remark}
For a hyperbolic knot in an arbitrary closed 3-manifold, it has been shown in \cite[Theorem 1.13]{FPS22} that there exists a finite set of pairs of slopes such that Dehn surgeries along any pair not in this set are never purely cosmetic. 
\end{remark}

For a knot in an integral homology sphere, Boyer and Lines also provide a sufficient condition under which no purely cosmetic surgeries can occur.

\begin{proposition}[{\cite[(5.1) Proposition (iii)]{BoyerLines}}]\label{prop:BL2}
Let $K$ be a knot in an integral homology sphere. 
If $\Delta''_K(1) \ne 0$, 
then $K$ admits no purely cosmetic surgeries. 
\end{proposition}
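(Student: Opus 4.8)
The plan is to argue the contrapositive using the Casson--Walker invariant $\lambda$, which is the natural carrier of the derivative $\Delta''_K(1)$ through its surgery formula. Suppose $K \subset \Sigma$ admits purely cosmetic surgeries along distinct slopes $r_1 \ne r_2$; I will show $\Delta''_K(1) = 0$. Since a purely cosmetic pair provides an orientation-preserving homeomorphism $\Sigma_{r_1}(K) \cong \Sigma_{r_2}(K)$, and since $\lambda$ is invariant under orientation-preserving homeomorphisms (reversing sign under orientation reversal), the first step is simply to record the identity $\lambda(\Sigma_{r_1}(K)) = \lambda(\Sigma_{r_2}(K))$.

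Next I would normalize the slopes. An orientation-preserving homeomorphism preserves $H_1$ together with its linking form, and $|H_1(\Sigma_{p/q}(K))| = |p|$; hence, writing the slopes in lowest terms, the numerators agree up to sign, and I may take $r_1 = p/q_1$ and $r_2 = p/q_2$ with $p>0$ and $q_1 \ne q_2$. The decisive point is to upgrade this to the congruence $q_2 \equiv q_1^{\pm 1} \pmod p$. Matching linking forms already forces $q_2/q_1$ to be a square modulo $p$, but this alone is too weak; the sharp congruence requires the finer oriented-homeomorphism data (for instance the Reidemeister--Turaev torsion, which for such Dehn fillings factors through the torsion of the underlying lens space $L(p,q)$, whose homeomorphism type detects $q$ exactly up to $q \mapsto q^{\pm 1}$). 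I expect this refinement to be the main obstacle.

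Granting the congruence, I would finish with Walker's rational surgery formula. Up to a universal normalization it reads $\lambda(\Sigma_{p/q}(K)) = \lambda(\Sigma) + \tfrac{q}{2p}\Delta''_K(1) + c(p,q)$, where the correction $c(p,q)$ depends only on the pair $(p,q)$ through the Dedekind sum $s(q,p)$. Two standard facts about Dedekind sums---that $s(q,p)$ depends only on $q \bmod p$, and that $s(q^{-1},p) = s(q,p)$---imply, via $q_2 \equiv q_1^{\pm 1} \pmod p$, that $c(p,q_1) = c(p,q_2)$. Subtracting the two instances of the formula and invoking the equality of Casson--Walker invariants from the first step, every term cancels except the knot-dependent one, leaving $0 = \tfrac{q_1-q_2}{2p}\,\Delta''_K(1)$. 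As the slopes are distinct we have $q_1 \ne q_2$, so $\Delta''_K(1) = 0$, contradicting the hypothesis and completing the proof.
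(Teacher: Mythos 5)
Your overall scheme is the right one, and its final step is exactly how the actual argument ends: subtract two instances of the surgery formula \eqref{eq:BL} for slopes $p/q_1$ and $p/q_2$, cancel the Dedekind-sum corrections, and conclude $0=\tfrac{q_1-q_2}{2p}\Delta''_K(1)$, hence $\Delta''_K(1)=0$; the homological normalization (equal positive numerator $p$, distinct $q_1,q_2$) is also fine. The genuine gap is the step you yourself flag as the main obstacle: the congruence $q_2\equiv q_1^{\pm1}\pmod p$ is never proved, and the route you sketch for it does not work. The Reidemeister--Turaev torsion of $\Sigma_{p/q}(K)$ does \emph{not} factor through the torsion of an underlying lens space: at a character sending a generator of $H_1\cong\mathbb{Z}/p$ to $\zeta$ it has, up to units, the form $\Delta_K(\zeta^a)\big/\bigl((\zeta^a-1)(\zeta^{b}-1)\bigr)$ with $a,b$ determined by $q$ modulo $p$, so the Alexander polynomial of $K$ enters on both sides of any comparison; after the unknown reindexing of $H_1$ induced by the homeomorphism, one cannot isolate the lens-space factor and extract $q_2\equiv q_1^{\pm1}$ unless one already controls $\Delta_K$ at roots of unity---which is essentially what is to be proven. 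No such slope congruence is available by classical means for knots in general homology spheres (for knots in $S^3$, congruences of this type come from Heegaard Floer input, not from torsion).

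What actually closes the argument---in Boyer--Lines, and in the same way in Section~3 of this paper, in the proof of the theorem implying Theorem~\ref{thm:integral}---is a second surgery formula, for the total Casson--Gordon invariant, Equation~\eqref{CGformula}: $\tau(\Sigma_K(p/q))=-4p\,s(q,p)-\sigma(K,p)$, where the total $p$-signature $\sigma(K,p)$ depends on $p$ but not on $q$. Since $\tau$ is an invariant of the oriented manifold, a purely cosmetic pair gives $\tau(\Sigma_K(p/q_1))=\tau(\Sigma_K(p/q_2))$, hence $s(q_1,p)=s(q_2,p)$ outright, with no congruence between $q_1$ and $q_2$ required. Substituting this equality into your subtracted instance of \eqref{eq:BL} yields $0=\tfrac{q_1-q_2}{2p}\,\Delta''_K(1)$, and since $q_1\ne q_2$ this forces $\Delta''_K(1)=0$. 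So the repair is local: replace your torsion step by the Casson--Gordon surgery formula; the rest of your argument then goes through verbatim.
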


Here $\Delta_K(t)$ denotes the Alexander polynomial of $K$, normalized so that $\Delta_K(t^{-1}) = \Delta_K(t)$ and $\Delta_K(1) = 1$.
It is known that for a knot $K$ in $S^3$, $\Delta''_K(1)$ is equal to twice the second coefficient $a_2(K)$ of the Conway polynomial of $K$.
Thus, the following can be regarded as a partial extension of Proposition~\ref{prop:BL2} to knots in integral homology spheres obtained by Dehn surgery on knots in $S^3$.

\begin{theorem}\label{thm:integral}
    Let $\Sigma$ be an integral homology sphere obtained by Dehn surgery on a knot $K_0$ in $S^3$ along the slope $1/{q_0}$ for a non-zero integer $q_0$. 
    Let $K$ be a knot in $\Sigma$. 
    Suppose that, identifying $K$ with a knot in $S^3$ before surgery on $K_0$, the linking number of $K$ and $K_0$ in $S^3$ is zero. 
    If $a_2 (K) - q_0 a_3(L) \ne 0$, then $K$ admits no purely cosmetic surgeries. 
\end{theorem}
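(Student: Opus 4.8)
The plan is to reduce the statement to Proposition~\ref{prop:BL2} by computing the relevant Alexander-polynomial quantity for $K$ viewed as a knot in $\Sigma$. Since $\Sigma$ is an integral homology sphere, $K$ is automatically null-homologous in $\Sigma$, so Proposition~\ref{prop:BL2} applies directly: it suffices to prove that the Alexander polynomial $\Delta_{K\subset\Sigma}$ of $K$ in $\Sigma$ satisfies $\Delta''_{K\subset\Sigma}(1)\neq 0$. Concretely, I aim to establish the identity
\[
\tfrac12\,\Delta''_{K\subset\Sigma}(1)=a_2(K)-q_0\,a_3(L),
\]
after which the hypothesis $a_2(K)-q_0a_3(L)\neq 0$ finishes the argument.

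To compute $\Delta''_{K\subset\Sigma}(1)$ I would pass to the Casson(--Walker) invariant $\lambda$. Because $K$ is null-homologous in the integral homology sphere $\Sigma$, the manifold $\Sigma_{1/n}(K)$ obtained by $1/n$-surgery is again an integral homology sphere for every integer $n$, and the Casson surgery formula for a knot in an integral homology sphere gives
\[
\lambda\bigl(\Sigma_{1/n}(K)\bigr)=\lambda(\Sigma)+\tfrac{n}{2}\,\Delta''_{K\subset\Sigma}(1).
\]
Here $\lambda(\Sigma)=\lambda\bigl(S^3_{1/q_0}(K_0)\bigr)=q_0\,a_2(K_0)$, again by the surgery formula applied to $K_0\subset S^3$ together with the relation $\Delta''_{K_0}(1)=2a_2(K_0)$ recalled in the excerpt.

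The key step is to recompute the same quantity directly from link data. Identifying $K$ with a knot in $S^3$ before the surgery on $K_0$, one has $\Sigma_{1/n}(K)=S^3_{(1/n,\,1/q_0)}(L)$, i.e.\ the result of $(1/n,\,1/q_0)$-surgery on the $2$-component link $L=K\cup K_0$. Applying the rational surgery formula for the Casson--Walker invariant of $2$-component links in $S^3$ to this surgery, and using the hypothesis $\lk(K,K_0)=0$ to discard the terms governed by the linking number, I expect a formula of the shape
\[
\lambda\bigl(S^3_{(1/n,1/q_0)}(L)\bigr)=n\,a_2(K)+q_0\,a_2(K_0)-n q_0\,a_3(L),
\]
the cross term being the genuinely linked contribution of $L$, carried by the $z^3$-coefficient $a_3(L)$ of the Conway polynomial. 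Since both surgery slopes have numerator $1$, the Dedekind-sum correction terms $s(n,1)=s(q_0,1)=0$ in the rational formula vanish, which is exactly what makes this expression clean. Equating the two computations of $\lambda\bigl(\Sigma_{1/n}(K)\bigr)$, cancelling the common summand $q_0a_2(K_0)$ and dividing by $n$, yields the desired identity $\tfrac12\Delta''_{K\subset\Sigma}(1)=a_2(K)-q_0a_3(L)$, and hence the conclusion.

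The main obstacle is this last step: extracting from the rational surgery formula for $2$-component links the precise coefficient of the cross term $nq_0$ and identifying it, with the correct sign and normalization, as $-a_3(L)$. This requires care with the normalization conventions for the Casson--Walker invariant, with the vanishing of the relevant Dedekind sums for numerator-$1$ slopes, and with the relation between the $z^3$-coefficient of $\nabla_L$ and the Sato--Levine invariant of the linking-number-zero link $L$. Once this coefficient bookkeeping is settled, the reduction to Proposition~\ref{prop:BL2} is immediate.
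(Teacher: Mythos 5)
Your proposal is correct, and it takes a genuinely different route from the paper's. The paper never computes $\Delta''_{K\subset\Sigma}(1)$: it assumes $L(p/q,1/q_0)\cong L(p/q',1/q_0)$, applies Ito's formula~\eqref{eq:Ito} at both slopes and subtracts, and then invokes the Casson--Gordon surgery formula~\eqref{CGformula} to conclude $s(q,p)=s(q',p)$, which via reciprocity~\eqref{eq:DedekindSymbol} collapses the Dedekind-symbol difference and leaves $\frac{q-q'}{p}\bigl(a_2(K)-q_0a_3(L)\bigr)=0$. You instead prove the identity $\tfrac12\Delta''_{K\subset\Sigma}(1)=a_2(K)-q_0a_3(L)$ by comparing the surgery formula~\eqref{eq:BL} in $\Sigma$ against Ito's formula at the numerator-one slopes $(1/n,1/q_0)$, and then quote Proposition~\ref{prop:BL2} as a black box. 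Your route exhibits Theorem~\ref{thm:integral} literally as an instance of Proposition~\ref{prop:BL2} (which is how the paper motivates it in the introduction), works only with slopes where the Dedekind data is elementary, and avoids any explicit appeal to the Casson--Gordon invariant --- that input is hidden inside Boyer--Lines' proof of Proposition~\ref{prop:BL2}. The paper's route, by contrast, yields a standalone link-level obstruction to $L(p/q,1/q_0)\cong L(p/q',1/q_0)$ for arbitrary distinct $q,q'$, proved in one self-contained computation.

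Two points of precision on the step you flag as the main obstacle. First, the Dedekind terms in~\eqref{eq:Ito} at these slopes are $S(1/n)=12\sign(n)\,s(1,n)$ and $S(1/q_0)=12\sign(q_0)\,s(1,q_0)$, and these do \emph{not} vanish; the sums that vanish are $s(n,1)$ and $s(q_0,1)$, and one must use reciprocity~\eqref{eq:DedekindSymbol} to convert, getting $S(1/n)=n+2/n-3\sign(n)$. Second, once this is done the bookkeeping closes exactly as you predicted: with $\ell=0$, $D=1/(nq_0)$, $\varsigma=\sign(n)+\sign(q_0)$, and $v_3(L)=-a_3(L)/2$, the terms $-\frac{1}{24q_0}-\frac{1}{24q_0n^2}-\frac{1}{24n}-\frac{1}{24nq_0^2}$ combine with $\frac{D}{24}\bigl(S(1/n)-1/n+S(1/q_0)-1/q_0\bigr)$ to give exactly $-\varsigma D/8$, so that
\begin{equation*}
\lambda\bigl(S^3_{(1/n,1/q_0)}(L)\bigr)=\frac{\lambda_w}{2}=n\,a_2(K)+q_0\,a_2(K_0)-nq_0\,a_3(L),
\end{equation*}
with the minus sign on the cross term as you wrote. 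Equating with $\lambda(\Sigma)+\tfrac{n}{2}\Delta''_{K\subset\Sigma}(1)$ and $\lambda(\Sigma)=q_0a_2(K_0)$ then gives your identity, and the reduction to Proposition~\ref{prop:BL2} is sound.
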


Here $a_3(L)$ denotes the third coefficient of the Conway polynomial of the link $L = K \cup K_0$ in $S^3$. 

Next, we focus on the chirally cosmetic case.
We denote by $\lambda_w$ the Casson--Walker invariant for homology spheres.
See \cite{Walker} for the definition and basic properties of $\lambda_w$.
The following result is also obtained from a result due to Boyer and Lines.

\begin{proposition}\label{prop:BL3}
    Let $K$ be a knot in an integral homology sphere $\Sigma$ with $\lambda_w(\Sigma) \ne 0$. 
    Then $K$ admits no integral chirally cosmetic surgeries. 
\end{proposition}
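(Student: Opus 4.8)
The plan is to combine the behaviour of the Casson--Walker invariant under orientation reversal with the rational surgery formula of Boyer and Lines \cite{BoyerLines}. Write $\Sigma_K(p)$ for the manifold obtained from $\Sigma$ by integral surgery of coefficient $p$ along $K$, and recall that $\lambda_w$ is odd under orientation reversal, $\lambda_w(-M) = -\lambda_w(M)$. Suppose, for contradiction, that $K$ admits a pair of integral chirally cosmetic surgeries. Then there are distinct integers $p$ and $p'$ together with an orientation-reversing homeomorphism $\Sigma_K(p) \cong -\Sigma_K(p')$. Since the first homology of $\Sigma_K(p)$ is cyclic of order $|p|$, comparing orders gives $|p| = |p'|$, and as the slopes are distinct this forces $p' = -p$ with $p \ne 0$. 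Taking $\lambda_w$ of the homeomorphism then yields
$$\lambda_w(\Sigma_K(p)) + \lambda_w(\Sigma_K(-p)) = 0.$$

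Next I would evaluate this sum using the surgery formula together with the way orientation reversal acts on surgery coefficients. Reversing the ambient orientation negates the coefficient, in the sense that $\Sigma_K(-p) \cong -\bigl((-\Sigma)_K(p)\bigr)$, so by oddness $\lambda_w(\Sigma_K(-p)) = -\lambda_w\bigl((-\Sigma)_K(p)\bigr)$. The Boyer--Lines formula expresses $\lambda_w(\Sigma_K(p))$ as $\lambda_w(\Sigma)$ plus a knot-dependent term $\tfrac{1}{2p}\Delta_K''(1)$ plus a correction that depends only on the coefficient $p$. Applying the same formula to $K$ viewed in $-\Sigma$, where $\lambda_w(-\Sigma) = -\lambda_w(\Sigma)$ while both the Alexander polynomial and the coefficient-correction are unchanged, and adding the two expressions, the knot-dependent term and the coefficient-correction cancel in pairs, leaving
$$\lambda_w(\Sigma_K(p)) + \lambda_w(\Sigma_K(-p)) = 2\lambda_w(\Sigma).$$

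Comparing the two displayed identities gives $2\lambda_w(\Sigma) = 0$, hence $\lambda_w(\Sigma) = 0$, contradicting the hypothesis $\lambda_w(\Sigma) \ne 0$; so no such pair of surgeries exists. The one point demanding care is the bookkeeping of orientation conventions: I would need to pin down exactly how reversing the ambient orientation acts on the surgery coefficient (the relation $\overline{\Sigma_K(p/q)} \cong \overline{\Sigma}_K(-p/q)$) and to confirm that, in the Boyer--Lines formula, the knot term and the coefficient-correction are genuinely odd in $p$, so that the cancellation leaving precisely $2\lambda_w(\Sigma)$ is exact. This is the main---though essentially routine---obstacle.
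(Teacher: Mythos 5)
Your proof is correct, and its skeleton matches the paper's: assume a chirally cosmetic pair, force the slopes to be $\pm p$ by the homological argument, use $\lambda_w(-M)=-\lambda_w(M)$ to get $\lambda_w(\Sigma_K(p))+\lambda_w(\Sigma_K(-p))=0$, and then show via the Boyer--Lines surgery formula that this sum equals $2\lambda_w(\Sigma)$. Where you genuinely differ is the mechanism for that last cancellation. The paper evaluates the surgery formula at both coefficients $p$ and $-p$ inside the fixed $\Sigma$, so the cancellation of the correction terms hinges on the antisymmetry $s(1,-p)=-s(1,p)$ of the Dedekind sum (in the paper's convention) together with the oddness in $p$ of the Alexander term. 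You instead evaluate the formula twice at the \emph{same} coefficient $p$ --- once for $K\subset\Sigma$ and once for $K\subset-\Sigma$ --- using the mirror relation $\Sigma_K(-p)\cong-\bigl((-\Sigma)_K(p)\bigr)$; then the knot term and the coefficient correction are literally identical in the two expressions and cancel with no parity input at all, needing only that $\Delta_K''(1)$ and the coefficient correction are unchanged when $\Sigma$ is replaced by $-\Sigma$, both of which are immediate. This is a real (if modest) advantage: Dedekind-sum conventions are exactly where sign errors creep in, and your route is insensitive to them. One small remark: your closing worry about having to ``confirm that the knot term and the coefficient-correction are genuinely odd in $p$'' is unnecessary --- that oddness is what the \emph{paper's} route requires, and your mirroring argument is designed precisely so that you never negate the coefficient inside the surgery formula.
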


The proof of Proposition~\ref{prop:BL3} will be given later.
The following extends Proposition~\ref{prop:BL3} to knots in rational homology spheres obtained by Dehn surgery on knots in $S^3$.

\begin{theorem}\label{thm:chirally}
    Let $\Sigma = K_0(p_0)$ be a rational homology sphere obtained by $p_0$-surgery on a knot $K_0$ in $S^3$ for a non-zero integer $p_0$. 
    If $p_0 \ne \dfrac{3 \sign(p_0) \pm \sqrt{96a_2(K_0)+1}}{2}$ holds, then any null-homologous knot $K$ in $\Sigma$ admits no integral chirally cosmetic surgeries. 
\end{theorem}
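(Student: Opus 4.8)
The plan is to recognize that the arithmetic hypothesis on $p_0$ is exactly the condition $\lambda_w(\Sigma)\neq 0$, and then to run the argument behind Proposition~\ref{prop:BL3} in the rational homology sphere setting. First I would rewrite the excluded values. Applying the Casson--Walker surgery formula to the knot $K_0\subset S^3$ expresses the ambient invariant as
\[
\lambda_w(\Sigma)=\lambda_w(K_0(p_0))=\frac{a_2(K_0)}{p_0}-\frac{(p_0-1)(p_0-2)}{24\,p_0},
\]
(up to the overall normalization of $\lambda_w$), using $\Delta''_{K_0}(1)=2a_2(K_0)$ together with the known value $\lambda_w(L(p_0,1))=-\tfrac12 s(1,p_0)=-\frac{(p_0-1)(p_0-2)}{24\,p_0}$, where $s(1,p_0)$ is the Dedekind sum. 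Hence $\lambda_w(\Sigma)=0$ if and only if $(p_0-1)(p_0-2)=24\,a_2(K_0)$, and solving this quadratic in $p_0$ gives precisely $p_0=\frac{3\pm\sqrt{96a_2(K_0)+1}}{2}$. So the hypothesis is equivalent to $\lambda_w(\Sigma)\neq 0$, and the goal becomes: if $\lambda_w(\Sigma)\neq 0$, then no null-homologous $K\subset\Sigma$ admits integral chirally cosmetic surgeries.

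Next I would argue by contradiction. Suppose $K(r)\cong -K(r')$ for distinct integer slopes $r\neq r'$, measured against the Seifert framing of the null-homologous knot $K$. Since $K$ is null-homologous and $|H_1(\Sigma)|=|p_0|$, integral surgery adjoins a $\mathbf{Z}/|r|$ summand to $H_1(\Sigma)$, so $|H_1(K(r))|=|p_0|\,|r|$; because an (orientation-reversing) homeomorphism preserves $|H_1|$, we get $|r|=|r'|$, and as $r\neq r'$ this forces $r'=-r$ with $r\neq 0$. Applying $\lambda_w$ and using $\lambda_w(-M)=-\lambda_w(M)$ yields the single numerical constraint
\[
\lambda_w(K(r))+\lambda_w(K(-r))=0.
\]

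The heart of the proof is to evaluate this sum using the rational surgery formula of the Casson--Walker invariant for the $2$-component link $L=K\cup K_0\subset S^3$. Writing $\ell=\lk(K,K_0)$ (an integer divisible by $p_0$, since $K$ is null-homologous in $\Sigma$), the manifold $K(\pm r)$ is surgery on $L$ with linking matrix $\left(\begin{smallmatrix} f_\pm & \ell \\ \ell & p_0\end{smallmatrix}\right)$, where $f_+$ and $f_-$ differ from $\pm r$ by the \emph{same} framing correction $\ell^2/p_0$ relating the Seifert longitudes of $K$ in $S^3$ and in $\Sigma$. Substituting and adding, I expect---exactly as in the integral homology sphere computation underlying Proposition~\ref{prop:BL3}---that every contribution odd under $r\mapsto -r$ cancels: the pieces recording the Conway data of $L$ (governed by $a_2(K)$ and $a_3(L)$) cancel, and the lens-space/Dedekind-sum pieces cancel because they reverse sign under the orientation reversal $r\mapsto -r$. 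What survives is the ambient term, giving $\lambda_w(K(r))+\lambda_w(K(-r))=2\lambda_w(\Sigma)$; combined with the displayed constraint this forces $\lambda_w(\Sigma)=0$, contradicting the hypothesis. The hard part will be this cancellation: since $f_+=-f_-+2\ell^2/p_0$ rather than $f_+=-f_-$ when $\ell\neq 0$, the Dedekind sums attached to the two framings have genuinely different arguments, so their cancellation is not term-by-term and must be extracted from the reciprocity law inside the explicit $2$-component formula.
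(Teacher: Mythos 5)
Your overall strategy coincides with the paper's: restrict the slopes to $\pm r$ by the homological argument, feed both surgeries into Ito's two-component surgery formula for $L=K\cup K_0$, and show that a chirally cosmetic pair forces the vanishing of exactly the quantity that the hypothesis on $p_0$ makes nonzero (the paper's quadratic $p_0^2-3p_0+2-24a_2(K_0)=0$ is precisely the vanishing of the surviving ambient terms, i.e.\ your $\lambda_w(\Sigma)=0$, at least for $p_0>0$). The problem is that your proof stops exactly at what you yourself call the heart of the proof: you never evaluate $\lambda_w(K(r))+\lambda_w(K(-r))$ from the formula; you only state that you \emph{expect} the $K$-dependent terms and the Dedekind contributions to cancel, and you defer that cancellation to an unexecuted appeal to ``the reciprocity law.'' Since everything before this point is routine (homology plus $\lambda_w(-M)=-\lambda_w(M)$), the unproved cancellation is not a detail but the entire content of the theorem, so as written this is a genuine gap.

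The gap is fillable, and more easily than you fear, which is worth recording. First, the paper removes your complication at the outset: since $K$ is null-homologous in $\Sigma$, $\ell=\lk(K,K_0)$ is a multiple of $p_0$, and isotoping $K$ across the dual knot of $K_0$ changes $\ell$ by $\pm p_0$; hence one may assume $\ell=0$, the linking matrices are diagonal, and there is no framing correction at all --- your $f_\pm$ become $\pm r$. Second, even if you keep $\ell=mp_0\neq 0$ (your setup, with $f_\pm=\pm r+\ell^2/p_0=\pm r+m^2p_0\in\mathbb{Z}$, which is correct), the Dedekind terms you worry about are not an obstacle: for \emph{integral} framings the relevant symbols are $S(f_\pm/1)=12\,s(f_\pm,1)=0$ and $S(p_0/1)=0$, so every Dedekind contribution vanishes identically and no reciprocity is needed; carrying the formula through with $D=rp_0$, $D'=-rp_0$ one finds the same surviving expression as in the $\ell=0$ case, namely $\tfrac{\lambda_w+\lambda_w'}{2}=\tfrac{\varsigma+\varsigma'}{8}+\tfrac{2a_2(K_0)}{p_0}-\tfrac{1}{6p_0}-\tfrac{p_0}{12}$. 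One final caution on your opening step: the identity $s(1,p_0)=(p_0-1)(p_0-2)/(12p_0)$ that you use to equate the excluded values with $\lambda_w(\Sigma)=0$ holds only for $p_0>0$; for $p_0<0$ the vanishing locus is $(|p_0|-1)(|p_0|-2)=24a_2(K_0)$, and correspondingly the signature pair is $(\varsigma,\varsigma')=(0,-2)$ rather than $(0,2)$, so the negative-$p_0$ case needs separate bookkeeping (a point on which the paper's own proof is also careless).
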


The following corollary follows from Theorem~\ref{thm:chirally}  immediately. 

\begin{corollary}\label{cor:1}
Let $\Sigma = K_0(p_0)$ be a rational homology sphere obtained by $p_0$-surgery on a knot $K_0$ in $S^3$. 
If $a_2(K_0)=0$ and $|p_0| \ge 3$, then any null-homologous knot in $\Sigma$ admits no integral chirally cosmetic surgeries. 
\end{corollary}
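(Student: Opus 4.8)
The plan is to obtain the corollary directly from Theorem~\ref{thm:chirally} by specializing the excluded slopes to the case $a_2(K_0) = 0$ and checking that the hypothesis $|p_0| \ge 3$ keeps $p_0$ away from them. Theorem~\ref{thm:chirally} forbids integral chirally cosmetic surgeries on every null-homologous knot in $\Sigma = K_0(p_0)$ provided $p_0$ is a nonzero integer different from the two values $\frac{3 \pm \sqrt{96 a_2(K_0) + 1}}{2}$. So the first, and essentially only, step is to evaluate this expression under the assumption $a_2(K_0) = 0$.

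Carrying out the substitution, the quantity under the radical becomes $96 \cdot 0 + 1 = 1$, so the two exceptional slopes collapse to $\frac{3 \pm 1}{2}$, namely $p_0 = 2$ and $p_0 = 1$. Thus, when $a_2(K_0) = 0$, Theorem~\ref{thm:chirally} applies to every nonzero integer $p_0$ other than $1$ and $2$.

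It then remains only to observe that the hypothesis $|p_0| \ge 3$ automatically rules out both $p_0 = 1$ and $p_0 = 2$, and in particular guarantees $p_0 \ne 0$ so that $\Sigma = K_0(p_0)$ is defined as a rational homology sphere. Hence $p_0$ meets the hypotheses of Theorem~\ref{thm:chirally}, and we conclude that any null-homologous knot in $\Sigma$ admits no integral chirally cosmetic surgeries.

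The argument is purely arithmetic, so I do not anticipate any genuine obstacle; the one point deserving a word of care is to record that the discriminant $96 a_2(K_0) + 1$ here equals the perfect square $1$, which is precisely what forces the two excluded slopes to be the small integers $1$ and $2$ that the bound $|p_0| \ge 3$ excludes.
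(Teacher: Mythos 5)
Your proof is correct and matches the paper's intent exactly: the paper derives Corollary~\ref{cor:1} from Theorem~\ref{thm:chirally} ``immediately,'' and your substitution $a_2(K_0)=0$ giving excluded slopes $\frac{3\pm 1}{2} \in \{1,2\}$, both ruled out by $|p_0|\ge 3$, is precisely that omitted arithmetic.
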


If the exterior of a knot admits an orientation-reversing self-homeomorphism, then the knot admits integral chirally cosmetic surgeries naturally. 
Thus, from the corollary above, we have the following. 

\begin{corollary}\label{cor:amphicheiral}
Let $K_0$ be a knot in $S^3$ with $a_2(K_0) = 0$. 
For any integer $p_0 \ge 3$, the rational homology sphere $\Sigma = K_0(p_0)$ contains no null-homologous knots with the exterior admitting an orientation-reversing self-homeomorphism. 
In particular, there are no null-homologous amphicheiral knot in $\Sigma$. 
\end{corollary}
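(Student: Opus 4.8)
The plan is to argue by contradiction, reducing the statement to Corollary~\ref{cor:1}. Suppose $K$ is a null-homologous knot in $\Sigma = K_0(p_0)$ whose exterior $E(K)$ admits an orientation-reversing self-homeomorphism $h$. Since $K$ is null-homologous, the boundary torus $\partial E(K)$ carries a canonical meridian $\mu$ and Seifert longitude $\lambda$, and I would first record that the integral surgery slopes are precisely the primitive classes meeting $\mu$ algebraically once, namely $n\mu + \lambda$ for an integer $n$. The aim is to show that $h$ forces two distinct integral surgeries to be orientation-reversingly homeomorphic, which is impossible by Corollary~\ref{cor:1}: the hypotheses $a_2(K_0)=0$ and $p_0 \ge 3$ give $|p_0| \ge 3$, so no null-homologous knot in $\Sigma$ admits integral chirally cosmetic surgeries.

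The key step is to analyze the action of $h$ on $\partial E(K)$. The Seifert longitude $\lambda$ is topologically characterized as the primitive class generating the kernel of $H_1(\partial E(K)) \to H_1(E(K))$, so necessarily $h_*\lambda = \pm\lambda$. For an amphicheiral knot---hence for the \emph{in particular} clause---the symmetry is realized by an orientation-reversing homeomorphism of $\Sigma$ carrying $K$ to itself; such a map preserves a tubular neighborhood of $K$ and therefore fixes the meridian up to sign, so $h_*\mu = \pm\mu$. Because $h$ reverses the orientation of $E(K)$, it reverses the induced boundary orientation, so $h_*$ acts on $H_1(\partial E(K))$ with determinant $-1$; together with $h_*\mu = \pm\mu$ and $h_*\lambda = \pm\lambda$ this forces $h_*$ to reverse exactly one of $\mu,\lambda$. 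In either case $h_*(n\mu+\lambda)$ equals $-n\mu+\lambda$ as an unoriented slope, i.e.\ $h$ carries the slope of the $n$-surgery to that of the $(-n)$-surgery.

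Coning $h$ off over the surgery solid tori then yields, for every integer $n$, an orientation-reversing homeomorphism exhibiting $K(n) \cong -K(-n)$. Choosing any $n \ne 0$, the slopes $n\mu+\lambda$ and $-n\mu+\lambda$ are distinct and both integral, so $\{K(n),K(-n)\}$ is an integral chirally cosmetic pair, contradicting Corollary~\ref{cor:1}. This contradiction shows that no such $K$ exists, and the amphicheiral statement is the special instance just treated, since an amphicheiral knot supplies exactly such an $h$.

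I expect the main obstacle to be the bookkeeping on the boundary torus---concretely, checking that $h$ preserves the meridian up to sign, so that it sends integral slopes to integral slopes and matches the $n$-surgery with the $(-n)$-surgery. This is transparent in the amphicheiral case because the symmetry extends across the tubular neighborhood of $K$; in the stated generality one must separately verify that an orientation-reversing self-homeomorphism of $E(K)$ respects the meridian slope, after which the determinant computation above applies verbatim.
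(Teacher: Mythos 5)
Your proposal follows essentially the same route as the paper. The paper's entire proof of Corollary~\ref{cor:amphicheiral} is the single sentence preceding its statement: an orientation-reversing self-homeomorphism of the exterior ``naturally'' yields integral chirally cosmetic surgeries, which Corollary~\ref{cor:1} forbids since $a_2(K_0)=0$ and $|p_0|\ge 3$. What you have done is supply the details behind that sentence: the homological characterization of the longitude forcing $h_*\lambda=\pm\lambda$, the determinant $-1$ computation on $H_1(\partial E(K))$, the identification of $h_*(n\mu+\lambda)$ with $-n\mu+\lambda$ as an unoriented slope, and the extension of $h$ across the filling solid tori to obtain $K(n)\cong -K(-n)$, which for $n\ne 0$ is a pair of distinct integral slopes and hence an integral chirally cosmetic pair. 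This is correct, and it settles the ``in particular'' clause completely, because a symmetry of the pair $(\Sigma,K)$ preserves a tubular neighborhood of $K$ and therefore the meridian up to sign.

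The step you flag but leave open --- that a general orientation-reversing self-homeomorphism of $E(K)$ satisfies $h_*\mu=\pm\mu$ rather than $h_*\mu=\pm\mu+b\lambda$ with $b\ne 0$ --- is a genuine issue for the first, stronger clause of the corollary, and you should be aware that the paper does not address it either: the word ``naturally'' carries all the weight there. Homology pins down only $\lambda$ (the unique primitive boundary class dying in $H_1(E(K))$), not $\mu$, and the issue is not merely bookkeeping. If, say, $h_*\lambda=\lambda$ and $h_*\mu=-\mu+b\lambda$ with $b\ne 0$, then $h_*(n\mu+\lambda)=-n\mu+(nb+1)\lambda$, which is an integral slope only when $nb\in\{0,-2\}$, and in exactly those cases it equals $\pm(n\mu+\lambda)$, the same unoriented slope. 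So such an $h$ carries every integral slope either to a non-integral slope or to itself, producing no pair of distinct integral slopes at all; the reduction to Corollary~\ref{cor:1} then collapses, and one only gets relations such as a non-integral surgery on $K$ yielding $-\Sigma$, which the Casson--Walker computation does not exclude. In short, your proof is exactly as complete as the paper's --- both establish the amphicheiral statement and the meridian-preserving case of the general statement --- and you deserve credit for making the unproved step explicit rather than eliding it.
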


For the definition of an amphicheiral knot in a general $3$--manifold, see \cite{IchiharaJongTaniyama}. 

A key ingredient in the proofs of Theorems~\ref{thm:rational}, \ref{thm:integral}, and \ref{thm:chirally} is the rational surgery formula for the Casson--Walker invariant $\lambda_w$ for 2-component links in $S^3$, established by Ito \cite[Theorem 1.5]{Ito2022}.

\bigskip 

Here we give proofs of Propositions~\ref{prop:BL1} and \ref{prop:BL3} for completeness. 
First we prove Proposition~\ref{prop:BL1}. 
Let $K$ be a knot in an integral homology sphere $\Sigma$. 
We denote by $K (p/q)$ the 3-manifold obtained by $p/q$-surgery on $K$. 
From \cite[Theorem 2.8]{BoyerLines}, we have the following surgery formula:
\begin{equation}\label{eq:BL}
\lambda (K (p/q)) = \lambda (\Sigma) +(q/p) (\Delta_K''(1))/2 - (\mathrm{sgn} (p)/2) s(q, p).
\end{equation}
Here $\lambda$ denotes the (original) Casson invariant used in \cite{BoyerLines}, for which $\lambda_w = 2 \lambda$ holds. 
See \cite[Section 6]{Lescop} for example. 
In the above, $s(q,p)$ denotes the Dedekind sum for coprime integers $p,q$ (see the next section for the definition). 
Suppose that $K$ admits a pair of integral purely cosmetic surgeries. 
Then, by Proposition~\ref{prop:BL2}, we have $\Delta_K''(1)=0$. 
Moreover, by homological reasons, the surgery slopes must be $\pm p$ for a positive integer $p$. 
Thus, by the formula \eqref{eq:BL}, we have $s(1, p) = -s(1, -p)$. 
Since $s(1, -p) = -s(1,p)$ holds, we have $2 s(1, p) = 0$. 
By the definition of $s(q,p)$, 
\begin{equation}\label{eq:s(1,p)} 
s(1,p) = (p-1)(p-2)/(12p). 
\end{equation}
Then, we see that there are at most two values for $p$, exactly, $p = 1,2$. 
Actually, it also follows from the surgery formula of the Casson--Gordon invariant given in \cite[Lemma 2.22]{BoyerLines}. 
See the formula \eqref{CGformula} in the next section. 

Next we prove Proposition~\ref{prop:BL3}. 
By the formula \eqref{eq:BL}, for a positive integer $p$, 
\[ \lambda (K(p)) = \lambda (\Sigma) + \Delta_K''(1)/(2p) - s(1,p)/2. \]
We denote by $-M$ the $3$-manifold $M$ with opposite orientation. 
Suppose that $K(p)$ is orientation preservingly homeomorphic to $-K(-p)$. 
It suffices to show $\lambda_w (\Sigma) = 2 \lambda (\Sigma) = 0$. 
Since $\lambda (-K(p)) = -\lambda (K(p))$, we have 
\[\lambda (K(p)) + \lambda (K(-p)) =0.\]
By the formula \eqref{eq:BL}, this is equivalent to 
\[(\lambda (\Sigma) + \Delta_K''(1)/(2p) - s(1,p)/2) + (\lambda (\Sigma) - \Delta_K''(1)/(2p) + s(1,-p)/2) = 0,\]
and so, we obtain $2\lambda (\Sigma) = s(1,p)$. 
By Equation~\eqref{eq:s(1,p)}, we have $p^2 - 3\left(1 + \lambda(\Sigma) \right)p + 2 = 0$. 
If this quadratic equation in $p$ has an integer solution $n > 0$, then the other solution is $2/n$. 
Then $n + 2/n = 3\left(1 + \lambda(\Sigma) \right)$, and thus, $n = 1$ or $n=2$ since $\lambda(\Sigma) \in \mathbb{Z}$. 
This implies that $\lambda(\Sigma)=0$, which completes the proof of Proposition~\ref{prop:BL3}.

\section{Preliminary}\label{sec:preliminary}

Let $L(r,s)$ denote the 3-manifold obtained by Dehn surgeries on a 2-component link $L$ in $S^3$ along slopes $r$ and $s$ for each component. 
That is, $L(r,s)$ is obtained from the exterior $E(L) = S^3 \setminus N(L)$ of $L$ by gluing solid tori $V, V'$ so that the meridians of $V, V'$ are identified with simple closed curves of slopes $r,s$ on the boundaries $\partial E(L)$ respectively.  

A key ingredient to prove Theorems~\ref{thm:rational}, \ref{thm:integral}, and \ref{thm:chirally} is the rational surgery formula of the Casson--Walker invariant $\lambda_w$ for 2-component links in $S^3$ established in \cite[Theorem 1.5]{Ito2022}. 
See \cite{Walker} for the definition and basic properties of the Casson--Walker invariant. 
Let $L=K_x \cup K_y$ be a 2-component link in $S^3$ and $p_x / q_x$ and $p_y / q_y$ slopes for the components $K_x$ and $K_y$, respectively. 
Then, by \cite[Theorem 1.5]{Ito2022}, if $L(p_x / q_x, p_y / q_y)$ is a rational homology 3-sphere, its Casson--Walker invariant $\lambda_w=\lambda_w(L(p_x / q_x, p_y / q_y))$ is given by 
\begin{align}\label{eq:Ito}
\begin{split}
D \left(\frac{\lambda_w}{2}-\frac{\varsigma}{8}\right) &= a_2(K_x)\frac{p_y}{q_y}-\frac{p_y}{24q_y} -\frac{p_y}{24q_yq_x^2} +\frac{p_y\ell^2}{24q_y}\\
&\quad +a_2(K_y)\frac{p_x}{q_x} -\frac{p_x}{24q_x}-\frac{p_x}{24q_xq_y^2}+\frac{p_x\ell^2}{24q_x}\\
&\quad + 2v_3(L) + \frac{D}{24}\left( S\left(\frac{p_x}{q_x}\right)-\frac{p_x}{q_x} +S\left(\frac{p_y}{q_y}\right)-\frac{p_y}{q_y} \right).
\end{split}
\end{align}
The terms used in the formula~\eqref{eq:Ito} are given as follows. 
\begin{itemize}
\item $a_{i}(K)$ denotes the coefficient of $z^{i}$ in the Conway polynomial of a link $K$. 
\item $\ell = \lk(K_x,K_y)$ denotes the linking number of $L = K_x \cup K_y$, which coincides with $a_1(L)$. 
\item For the linking matrix $A=\begin{pmatrix} \frac{p_x}{q_x} & \ell \\ \ell & \frac{p_y}{q_y} \end{pmatrix}$, $D= \det A$ and $\varsigma =\sigma(A)$. Here $\sigma$ denotes the signature.
\item $v_3(L)$ denotes %the coefficient of $\Htree$ in the Kontsevich invariant of $L$. This is explicitly written by
the invariant defined by 
\[ v_3(L)=\frac{1}{2}\left( -a_3(L)+(a_2(K_x)+a_2(K_y))\ell +\frac{1}{12}(\ell^3-\ell) \right) . \]
\item For coprime integers $p$ and $q$, $S\left(\dfrac{p}{q}\right)$ denotes the \emph{Dedekind symbol} related to the more famous Dedekind sum $s(p,q)$ by 
\[ S\left(\frac{p}{q}\right)=  12(\mathrm{sign}(q))s(p,q). \]
The Dedekind sum $s(p,q)$ is defined\footnote{In \cite[Theorem 1.5]{Ito2022}, there is a typo in the definition. Compare with \cite[Definition 1.5]{BarNatanLawrence}\label{ft}. Note that, not explicitly stated in \cite[Definition 1.5]{BarNatanLawrence}, $s(p,\pm 1)$ should be $0$. In addition, our notation for the Dedekind sum looks different from that in \cite{BoyerLines}, but they are essentially same. 
} by
\[ s(p,q)=\sum_{k=1}^{|q|-1} \left(\!\!\left( \frac{k}{q}\right)\!\!\right)\left(\!\!\left( \frac{kp}{q}\right)\!\!\right), 
\]
where $(\!( x)\!)=x-\lfloor x\rfloor -\frac{1}{2}$.
\end{itemize}

We note that the Dedekind symbol satisfies the following.
\begin{equation}\label{eq:DedekindSymbol}
    S\left(\frac{p}{q}\right) + S\left(\frac{q}{p}\right) = 
\frac{p}{q} + \frac{q}{p} + \frac{1}{pq} - 3 \sign (p q).
\end{equation}
See \cite[Definition 1.5]{BarNatanLawrence} for example. 

Another key ingredient is the surgery formula of the total Casson--Gordon invariant $\tau$ for homology lens spaces. 
See \cite[Definition 2.20]{BoyerLines} and \cite{CassonGordon} for the definition. 
Let $K$ be a knot in an integral homology 3-sphere $\Sigma$. 
The Casson--Gordon invariant\footnote{In \cite{BoyerLines}, this invariant is called the total Casson--Gordon invariant. In this paper, we call it without ``total'' for brevity.} $\tau$ satisfies the following surgery formula \cite[Lemma 2.22]{BoyerLines}: 
\begin{equation}\label{CGformula}
\tau(\Sigma_{K}(p/ q)) = -4p \cdot s(q,p) - \sigma(K,p).
\end{equation}
Here $\sigma(K,p)= \sum_{\omega: \omega^{p}=1}\sigma_{\omega}(K)$ denotes \textit{the total $p$-signature} for $K$, where $\sigma_{\omega}(K)$ ($\omega \in \{z \in \mathbb{C}\: | \: |z|=1 \}$) denotes \textit{the Levine-Tristram signature}, i.e., the signature of $(1-\omega)S + (1-\overline{\omega})S^{T}$ for a Seifert matrix $S$ of $K$.

\section{Proofs}

In the following, by $\cong$, we mean orientation-preservingly homeomorphic. 
Under the setting above, we have the following, from which Theorem~\ref{thm:rational} follows. 

\begin{theorem}\label{thm3}
    Let $L=K \cup K_0$ be a two-component link in $S^3$ with $\lk (K,K_0) = 0$. 
    Then, for a non-zero rational number $p_0 / q_0$, there are at most two positive integers $p$ satisfying $L(p/1,p_0 / q_0) \cong L(-p/1, p_0 / q_0)$. 
\end{theorem}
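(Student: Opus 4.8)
The plan is to apply Ito's rational surgery formula \eqref{eq:Ito} to both $L(p/1, p_0/q_0)$ and $L(-p/1, p_0/q_0)$, and to exploit the fact that an orientation-preserving homeomorphism forces their Casson--Walker invariants to coincide. Since $\lambda_w$ reverses sign under orientation reversal and $\cong$ denotes an orientation-preserving homeomorphism, the hypothesis $L(p/1, p_0/q_0) \cong L(-p/1, p_0/q_0)$ yields $\lambda_w(L(p/1, p_0/q_0)) = \lambda_w(L(-p/1, p_0/q_0))$. The strategy is to show that, after inserting the two evaluations of \eqref{eq:Ito}, this equality collapses to a single quadratic equation in $p$, which can have at most two solutions.

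First I would specialize \eqref{eq:Ito} by setting $K_x = K$, $K_y = K_0$, $p_x/q_x = \pm p/1$, and $p_y/q_y = p_0/q_0$. The hypothesis $\ell = \lk(K, K_0) = 0$ kills every term carrying a factor of $\ell$ or $\ell^2$; in particular $2v_3(L)$ reduces to $-a_3(L)$, and the linking matrix becomes diagonal. Because $q_x = 1$, the Dedekind symbol $S(\pm p/1) = 12\, s(\pm p, 1)$ vanishes (recall $s(\cdot, \pm 1) = 0$), removing the only genuinely arithmetic contribution of $p$. The remaining $p$-dependence enters through the determinant $D^{\pm} = \pm\, p\, p_0/q_0$, through the linear terms $\pm a_2(K_0)\, p$ and $\mp p/24 \mp p/(24 q_0^2)$, and through the bracket $\tfrac{D^{\pm}}{24}\bigl(\mp p + S(p_0/q_0) - p_0/q_0\bigr)$.

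The decisive bookkeeping step is the signature term. Writing $\varsigma^{\pm}$ for the signature of the diagonal linking matrix in the two cases, flipping the sign of the first diagonal entry from $+p$ to $-p$ changes the signature by exactly $-2$ irrespective of $\sign(p_0/q_0)$, so $\varsigma^+ - \varsigma^- = 2$ in all cases. Solving each instance of \eqref{eq:Ito} for $\lambda_w^{\pm}/2$ and using $D^- = -D^+$, the equality $\lambda_w^+ = \lambda_w^-$ becomes $\tfrac{\varsigma^+ - \varsigma^-}{8} + \tfrac{1}{D^+}(\mathrm{RHS}^+ + \mathrm{RHS}^-) = 0$. In the sum $\mathrm{RHS}^+ + \mathrm{RHS}^-$ the terms odd in $p$ cancel while the even part doubles, leaving a $p$-independent constant together with a single term $-\tfrac{D^+ p}{12}$; after dividing by $D^+ = p\, p_0/q_0$ and clearing denominators one is left with an equation of the shape
\[ p^2 - 3p - \frac{24 q_0}{p_0}\!\left( a_2(K)\frac{p_0}{q_0} - \frac{p_0}{12 q_0} - a_3(L) \right) = 0. \]
Since the coefficient of $p^2$ is nonzero, this quadratic has at most two real roots, hence at most two positive integers $p$ can satisfy the homeomorphism condition.

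The main obstacle is purely the careful tracking of terms in \eqref{eq:Ito}: one must verify that every $p$-dependent contribution other than the $\pm p/12$ coming from the bracket either cancels in $\mathrm{RHS}^+ + \mathrm{RHS}^-$ or combines into the constant term, and in particular that the $1/D^{\pm}$ prefactors reorganize correctly once $D^- = -D^+$ is used. Getting the signature difference $\varsigma^+ - \varsigma^- = 2$ right in both sign regimes of $p_0/q_0$ is the only genuinely nonroutine point; everything else is a finite computation that I would carry out explicitly.
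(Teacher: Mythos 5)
Your proposal is correct and takes essentially the same route as the paper's own proof: both apply Ito's formula \eqref{eq:Ito} to the two surgery presentations, use $\ell=0$ (so $2v_3(L)=-a_3(L)$) and $S(\pm p/1)=0$ to eliminate the extraneous terms, observe $\varsigma-\varsigma'=2$ regardless of $\sign(p_0/q_0)$, and reduce the equality of Casson--Walker invariants to a quadratic in $p$. Your final equation expands to exactly the paper's Equation~\eqref{eq:RHS1}, so the argument is sound as stated.
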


\begin{proof}
Suppose that $L(p/1,p_0/q_0) \cong L(-p/1, p_0/q_0)$, and set $\lambda_w = \lambda_w ( L(p/1,p_0/q_0))$, $\lambda_w' = \lambda_w ( L(- p/1, p_0/q_0))$. 
Then $\lambda_w = \lambda_w'$ holds. 

We apply the formula~\eqref{eq:Ito} in two ways; apply~\eqref{eq:Ito} with $p_x = p$, $q_x=1$, $p_y=p_0$, $q_y=q_0$, and $p_x = -p$, $q_x = 1$, $p_y=p_0$, $q_y=q_0$. 
By the assumption $\ell = \lk (K,K_0) = 0$, the linking matrices, $A$ and $A'$, of the two manifolds $L(p/1,p_0/q_0)$ and $L(-p/1, p_0/q_0)$ are given by
\[ A = \begin{pmatrix} p & 0 \\ 0 & \frac{p_0}{q_0} \end{pmatrix} \text{ and } A'=\begin{pmatrix} -p & 0 \\ 0 & \frac{p_0}{q_0} \end{pmatrix}\] 
respectively. 
Thus, their determinants, $D$ and $D'$, are given by 
\[ D = \frac{p p_0}{q_0} \text{ and } D' = \frac{-p p_0}{q_0} . \]
Set $\varsigma = \sigma(A)$ and $\varsigma'= \sigma(A')$. 
Then by the formula~\eqref{eq:Ito}, for the manifold $L(p/1,p_0/q_0)$ and its Casson--Walker invariant $\lambda_w$, we have 
\begin{align*}
    \frac{p p_0}{q_0}\left( \dfrac{\lambda_w}{2} - \dfrac{\varsigma}{8}\right)
    &= a_2(K)\frac{p_0}{q_0} - \frac{p_0}{12q_0} + a_2(K_0)p - \frac{p}{24} - \frac{p}{24q_0^2} \\ 
    &\quad + 2v_3(L) + \frac{pp_0}{24q_0}\left( S\left(\frac{p}{1}\right) - p + S\left(\frac{p_0}{q_0}\right) - \frac{p_0}{q_0}\right)\\ 
    \iff
    \dfrac{\lambda_w}{2} 
    &= \frac{\varsigma}{8} + \frac{a_2(K)}{p} - \frac{1}{12p} + a_2(K_0)\frac{q_0}{p_0} - \frac{q_0}{24p_0} - \frac{1}{24p_0q_0} \\ 
    &\quad + 2v_3(L)\frac{q_0}{p p_0} + \frac{1}{24}\left( S\left(\frac{p}{1}\right) - p + S\left(\frac{p_0}{q_0}\right) - \frac{p_0}{q_0}\right). 
\end{align*}
By a similar way, for the manifold $L(-p/1,p_0/q_0)$ and its Casson--Walker invariant $\lambda_w'$, we have 
\begin{align*}
    \dfrac{\lambda_w'}{2} 
    &= \frac{\varsigma'}{8} - \frac{a_2(K)}{p} + \frac{1}{12p} + a_2(K_0)\frac{q_0}{p_0} - \frac{q_0}{24p_0} - \frac{1}{24p_0q_0} \\ 
    &\quad - 2v_3(L)\frac{q_0}{p p_0} + \frac{1}{24}\left( S\left(\frac{p}{-1}\right) + p + S\left(\frac{p_0}{q_0}\right) - \frac{p_0}{q_0}\right). 
\end{align*}
Taking the difference, we obtain: 
\begin{align*}
    \begin{split}
    \frac{\lambda_w}{2} - \frac{\lambda_w'}{2} 
    &= \frac{\varsigma - \varsigma'}{8} 
     +\frac{2}{p} \left( a_2(K) - \frac{1}{12} + \frac{ 2 q_0}{p_0} \,v_3(L) \right) 
     + \frac{1}{24} \left( S\left( \frac{p}{1} \right) - S\left( \frac{p}{-1} \right) - 2 p \right). 
    \end{split}
\end{align*}    
Here, we note that 
\[
S\left(\frac{p}{1}\right) - S\left(\frac{p}{-1}\right) = 12 (s(p,1) + s(p,-1)) = 0 \] 
since $s(p,\pm1) =0$. (See the footnote on page~\pageref{ft}.)
Also note that $v_3(L) = -a_3(L)/2$ since $\ell = 0$. 
Thus, combining the condition $\lambda_w = \lambda_w'$, we conclude that
\begin{equation}\label{eq:RHS1}
p^2 - \frac{3(\varsigma - \varsigma')}{2}\cdot p  + \left( 2 - 24 a_2(K) + \frac{24  q_0}{p_0} \,a_3(L) \right) =0.  
\end{equation}
There exist at most two real values for such $p$. 
This implies that, for a rational number $p_0/q_0 \ne 0$, there are at most two positive integers $p$ satisfying $L(p/1,p_0/q_0) \cong L(-p/1, p_0/q_0)$.
\end{proof}

\begin{proof}[Proof of Theorem~\ref{thm:rational}]
Let $K$ be a null-homologous knot in a rational homology sphere $\Sigma$ obtained by Dehn surgery on a knot $K_0$ in $S^3$. 
Let $K_0^*$ be the dual knot of $K_0$ in $\Sigma$. 
% Since $K$ is null-homologous, by isotoping $K$ in $\Sigma$, we may assume that $K$ bounds an orientable compact surface in the exterior of the dual knot of $K_0$ in $\Sigma$. 
Since $K$ is null-homologous, $K$ bounds an oriented compact surface $F$ in $\Sigma$. 
We may assume that $F \cap K_0$ consists of finite points $t_1, \dots, t_k$. 
Take an arc $\alpha_i$ on $F$ which connects $t_i$ and a point of $\partial F$ for $i=1,\dots,k$. 
By isotoping $F$ (and $K = \partial F$) so that $\alpha_i$ shrinks to $t_i$ for $i=1,\dots,k$, we obtain an oriented compact surface, say $F$ again, disjoint from $K_0^*$ in $\Sigma$. 
%we may assume that $F \cap K_0 = \emptyset$. 
This implies that, after an isotopy in $\Sigma$, $K$ bounds an oriented compact surface in the exterior of $K_0^*$ in $\Sigma$. 
Note that such an isotopy preserves the meridian-longitude pair of $K$. 
Since $F \subset \Sigma \setminus K_0^* = S^3 \setminus K_0$, $K$ is regarded as a knot in $S^3$ with the linking number $0$ with $K_0$. 

Assume that $K$ in $\Sigma$ admits integral purely cosmetic surgeries. 
Since the meridian of $K$ in $\Sigma$ is also the meridian of $K$ regarded as a knot in $S^3$ as above, the corresponding surgery slopes in $S^3$ must be $\pm p$ with some positive integer $p$. 
Thus, applying Theorem~\ref{thm3} to this $K$ and $K_0$, we see that $K$ admits at most two pairs of integral purely cosmetic surgeries. 
\end{proof}

Similarly, Theorem~\ref{thm:integral} immediately follows from the next one.  

\begin{theorem}
    Let $L = K \cup K_0$ be a 2-component link in $S^3$ with $\lk(K,K_0)=0$. 
    If $a_2 (K) - q_0 a_3(L) \ne 0$, then $L(p/q,1/q_0) \not\cong L(p/q', 1/q_0)$ for a non-zero integer $q_0$ and distinct non-zero integers $q, q'$ coprime to $p>0$. 
\end{theorem}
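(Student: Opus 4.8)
The plan is to argue by contradiction: assume that the orientation-preserving homeomorphism $L(p/q,1/q_0) \cong L(p/q', 1/q_0)$ exists and derive $a_2(K) - q_0 a_3(L) = 0$. Since $\ell = \lk(K,K_0) = 0$ and the surgery coefficients are $p/q$ (resp.\ $p/q'$) on $K$ and $1/q_0$ on $K_0$, the linking matrices are diagonal with nonzero determinant, so both manifolds are rational homology spheres; moreover $H_1 \cong \mathbb{Z}/p\mathbb{Z}$, so they are homology lens spaces. Consequently both the Casson--Walker invariant $\lambda_w$ and the Casson--Gordon invariant $\tau$ are defined, and both are preserved by the orientation-preserving homeomorphism. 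Writing $\Sigma = K_0(1/q_0)$ (an integral homology sphere) and viewing $K$ as a knot in $\Sigma$, we may also regard $L(p/q,1/q_0)$ as $\Sigma_K(p/q)$, which makes the surgery formula \eqref{CGformula} available.

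First I would apply Ito's formula \eqref{eq:Ito} to each manifold, with $(p_x,q_x,p_y,q_y) = (p,q,1,q_0)$ and $(p,q',1,q_0)$ respectively. The diagonal linking matrices give $D = p/(qq_0)$, $D' = p/(q'q_0)$ and signatures $\varsigma = \sign(q)+\sign(q_0)$, $\varsigma' = \sign(q')+\sign(q_0)$; also $v_3(L) = -a_3(L)/2$ since $\ell=0$. Solving each instance of \eqref{eq:Ito} for $\lambda_w/2$ and subtracting, all terms depending only on $K_0$ and $q_0$ cancel, and the equality $\lambda_w = \lambda_w'$ reduces to a relation whose coefficient of interest is $\tfrac{q-q'}{p}\bigl(a_2(K) - q_0 a_3(L)\bigr)$, with the remaining contributions being $\tfrac{\varsigma-\varsigma'}{8}$, several explicit rational functions of $p,q,q'$, and the Dedekind symbol difference $S(p/q) - S(p/q')$.

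The main obstacle is that, unlike the $q=\pm 1$ situation of Theorem~\ref{thm3} where $s(p,\pm 1)=0$ kills the Dedekind contributions, here $S(p/q)$ and $S(p/q')$ do not simplify on their own. This is exactly where the Casson--Gordon invariant should be brought in. By \eqref{CGformula} one has $\tau\bigl(L(p/q,1/q_0)\bigr) = -4p\, s(q,p) - \sigma(K,p)$, and since $\sigma(K,p)$ depends only on $p$ (not on $q$), the homeomorphism forces $s(q,p) = s(q',p)$, equivalently $S(q/p) = S(q'/p)$. Then, applying the reciprocity law \eqref{eq:DedekindSymbol} to rewrite each $S(p/q)$ in terms of the reciprocal symbol $S(q/p)$, the now-controlled reciprocal symbols cancel, leaving
\[
S\!\left(\tfrac{p}{q}\right) - S\!\left(\tfrac{p}{q'}\right) = \tfrac{p}{q} - \tfrac{p}{q'} + \tfrac{q-q'}{p} + \tfrac{1}{p}\!\left(\tfrac{1}{q}-\tfrac{1}{q'}\right) - 3\bigl(\sign(q)-\sign(q')\bigr),
\]
an explicit expression plus a signature term.

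Substituting this back into the Casson--Walker difference, I expect every explicit rational-function term to cancel, while the $-3(\sign(q)-\sign(q'))$ contribution combines with $\tfrac{\varsigma-\varsigma'}{8} = \tfrac{1}{8}(\sign(q)-\sign(q'))$ to vanish as well. What then survives is precisely $\tfrac{q-q'}{p}\bigl(a_2(K) - q_0 a_3(L)\bigr) = 0$; since $q \neq q'$ and $p>0$, this gives $a_2(K) - q_0 a_3(L) = 0$, contradicting the hypothesis. Verifying these cancellations is routine bookkeeping with \eqref{eq:Ito}, \eqref{eq:DedekindSymbol}, and the signature computation; the conceptual content of the proof is the pairing of the Casson--Walker invariant with the Casson--Gordon invariant, the latter being used solely to neutralize the otherwise intractable Dedekind-symbol terms.
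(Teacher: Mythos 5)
Your proposal is correct and follows essentially the same route as the paper's own proof: applying Ito's formula \eqref{eq:Ito} to both surgery descriptions, using the Casson--Gordon surgery formula \eqref{CGformula} to force $s(q,p)=s(q',p)$, and then invoking Dedekind reciprocity \eqref{eq:DedekindSymbol} so that all rational and signature terms cancel, leaving $\frac{q-q'}{p}\bigl(a_2(K)-q_0 a_3(L)\bigr)=0$. The cancellations you describe as routine bookkeeping are exactly the ones carried out in the paper, and your intermediate identities (the signature formula $\varsigma=\sign(q)+\sign(q_0)$ and the simplified Dedekind difference) match the paper's computation.
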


\begin{proof}
Suppose that $L(p/q,1/q_0) \cong L(p/q', 1/q_0)$, and set $\lambda_w = \lambda_w ( L(p/q,1/q_0))$, $\lambda_w' = \lambda_w ( L(p/q', 1/q_0))$. 
Then, $\lambda_w = \lambda_w'$ holds, and we will deduce that $a_2 (K) - q_0 a_3(L) = 0$ holds. 
To do so, we apply the formula~\eqref{eq:Ito} in two ways; 
apply~\eqref{eq:Ito} with $p_x=p$, $q_x=q$, $p_y=1$, $q_y=q_0$, and $p_x=p$, $q_x=q'$, $p_y=1$, $q_y=q_0$. 

By the assumption $\ell=0$, the linking matrices, $A$ and $A'$, of the two manifolds $L(p/1,1/q_0)$ and $L(p/q', 1/q_0)$ are given by
\[ A = \begin{pmatrix} \frac{p}{q} & 0 \\ 0 & \frac{1}{q_0} \end{pmatrix} \text{ and } A'=\begin{pmatrix} \frac{p}{q'} & 0 \\ 0 & \frac{1}{q_0} \end{pmatrix}, \] 
respectively. 
Thus, their determinants, $D$ and $D'$, are given by 
\[ D = \frac{p}{q\,q_0} \text{ and } D'=\frac{p}{q'\,q_0} , \]
respectively. 
Also, for their signatures, we have
\[
\varsigma = 
\begin{cases}
    -2 & \text{if } q,q_0 <0 \\
    0 & \text{if } q q_0 <0 \\
    2 & \text{if } q,q_0 >0
\end{cases}
\]
and the same holds for $\varsigma'$. 
Thus, it follows that 
\[ \varsigma - \varsigma' = \mathrm{sign}(q) - \mathrm{sign}(q').\]

Then, applying the formula~\eqref{eq:Ito} in two ways, and taking their difference, we obtain the following. 
\begin{align}\label{Eq2}
\begin{split}
0
&= \frac{\varsigma - \varsigma'}{8} 
+ \frac{q-q'}{p} \left( a_2(K) + 2 q_0 \,v_3(L) \right)
+ \frac{q-q'}{24p} \left( \frac{1}{qq'} - 1 \right)\\
&\quad + \frac{1}{24} \left( \left( S\left( \frac{p}{q} \right) - \frac{p}{q} \right) - \left( S\left( \frac{p}{q'} \right) - \frac{p}{q'} \right) \right) . 
\end{split}
\end{align}

By $1/q_0$-surgery on $K_0$ in $S^3$, we have an integral homology 3-sphere $\Sigma$. 
Then, the knot $K$ can be regarded as a knot in $\Sigma$ naturally. 
Let $K(p/q), K(p/q')$ denote the 3-manifold obtained from $\Sigma$ by $p/q$- and $p/q'$-surgeries on $K$, respectively. 
That is, $L(p/q,1/q_0) \cong K(p/q)$ and $L(p/q', 1/q_0) \cong K(p/q')$ hold. 
This implies that the assumption $L(p/q,1/q_0) \cong L(p/q', 1/q_0)$ is equivalent to $K(p/q) \cong K(p/q')$. 
Since $\Sigma$ is an integral homology sphere, using the surgery formula of the Casson--Gordon invariant (Equation~\eqref{CGformula}), together with $p>0$, we have 
\[
\tau (K(p/q)) - \tau (K(p/q')) = 4p\left( s(q',p) - s(q,p) \right)= 0. 
\]
On the other hand, from Equation~\eqref{eq:DedekindSymbol}, we have 
\begin{align*}
S\left(\frac{p}{q}\right) 
&= \frac{p}{q} + \frac{q}{p} + \frac{1}{pq} - 3 \mathrm{sign}(pq) - S \left( \frac{q}{p} \right) \\
&= 
\frac{p}{q} + \frac{q}{p} + \frac{1}{pq} - 3 \mathrm{sign}(pq) - 
12 \mathrm{sign}(p) s(q,p) 
\end{align*} 
and 
\[
S\left(\frac{p}{q'}\right) 
= 
\frac{p}{q'} + \frac{q'}{p} + \frac{1}{pq'} - 3 \mathrm{sign}(pq') - 
12 \mathrm{sign}(p) s(q',p).
\]
Therefore, 
it follows that 
\[
\left( S\left( \frac{p}{q} \right) - \frac{p}{q} \right)
- \left( S\left( \frac{p}{q'} \right) - \frac{p}{q'} \right) = 
\frac{q-q'}{p}\left( 1 - \frac{1}{qq'} \right) - 3 \left( \mathrm{sign}(q) - \mathrm{sign}(q') \right).
\]
Then, it follows from Equation~\eqref{Eq2} that 
\[
0
 = \frac{ q - q' }{ p } \left( a_2 (K) + 2 q_0 v_3(L) \right) 
 = \frac{ q - q' }{ p } \left( a_2 (K) - q_0 a_3(L) \right).
 \]
Consequently, it is shown that $a_2 (K) = q_0 a_3(L)$ holds. 
\end{proof}

Finally, Theorem~\ref{thm:chirally} follows from the next one.  

\begin{theorem}\label{thm6}
    Let $L = K \cup K_0$ be a 2-component link in $S^3$ with $\lk(K,K_0)=0$. 
    Let $\Sigma = K_0(p_0)$ be a rational homology sphere obtained by $p_0$-surgery on a knot $K_0$ in $S^3$ for a non-zero integer $p_0$. 
    If $p_0 \ne \dfrac{3 \sign(p_0) \pm \sqrt{96a_2(K_0)+1}}{2}$ holds, then $L(p,p_0) \not\cong - L(-p, p_0)$ for any positive integer $p$. 
\end{theorem}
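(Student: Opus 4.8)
The plan is to mimic the structure of the proof of Theorem~\ref{thm3}, but now tracking the orientation-reversal built into $-L(-p,p_0)$. Recall that for a closed oriented $3$--manifold $M$ the Casson--Walker invariant is orientation-sensitive: $\lambda_w(-M) = -\lambda_w(M)$. So the hypothesis $L(p,p_0) \cong -L(-p,p_0)$ (orientation-preservingly) forces
\begin{equation*}
\lambda_w\bigl(L(p,p_0)\bigr) = -\lambda_w\bigl(L(-p,p_0)\bigr),
\end{equation*}
that is, the two Casson--Walker invariants \emph{add} to zero rather than being equal. First I would apply the surgery formula~\eqref{eq:Ito} twice, with $p_x=p$, $q_x=1$, $p_y=p_0$, $q_y=1$ for the first manifold and $p_x=-p$, $q_x=1$, $p_y=p_0$, $q_y=1$ for the second. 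Since $\ell=\lk(K,K_0)=0$, the linking matrices are diagonal, $\det A = p p_0$ and $\det A' = -p p_0$, and I would record the signatures $\varsigma,\varsigma'$ as in Theorem~\ref{thm3}.

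Next I would write out $\lambda_w/2$ and $\lambda_w'/2$ from~\eqref{eq:Ito}, dividing through by the determinants, exactly as in the proof of Theorem~\ref{thm3}. The crucial difference is the sign convention: instead of taking the \emph{difference} of the two expressions (which enforced $\lambda_w=\lambda_w'$ there), here I would take the \emph{sum}, since the chirally cosmetic condition gives $\lambda_w = -\lambda_w'$, i.e.\ $\lambda_w/2 + \lambda_w'/2 = 0$. I expect the terms that were antisymmetric under $p\mapsto -p$ (the $a_2(K)/p$ term, the $1/(12p)$ term, and the $v_3(L)$ term) to survive in the sum with a factor of two, while the symmetric terms ($a_2(K_0)$-type and $1/(24p_0)$-type contributions) combine. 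I would simplify the Dedekind-symbol contributions using $S(p/1)+S(p/(-1)) = 12\bigl(s(p,1)+s(p,-1)\bigr)=0$, exactly as the vanishing used in Theorem~\ref{thm3}, and use $v_3(L)=-a_3(L)/2$ from $\ell=0$.

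Setting the resulting sum to zero and clearing denominators (multiplying by $2p$ or similar) should produce a polynomial equation in $p$ and $p_0$. The main obstacle, and where care is needed, is to track the $\varsigma+\varsigma'$ term correctly and to isolate exactly how $p_0$ enters. Since the problem wants a conclusion about $p_0$ avoiding $(3\pm\sqrt{96a_2(K_0)+1})/2$, I anticipate that after simplification the surviving equation in fact becomes \emph{independent of $p$} (the symmetric-in-$p$ terms all cancel in the sum, leaving only the $p$-even contributions), reducing to a quadratic condition purely in $p_0$ of the form
\begin{equation*}
p_0^2 - 3 p_0 + \bigl(\text{const}\cdot a_2(K_0)\bigr) = 0
\end{equation*}
up to normalization. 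Solving this quadratic by the quadratic formula gives precisely $p_0 = (3\pm\sqrt{96a_2(K_0)+1})/2$, so the hypothesis that $p_0$ avoids these two values makes the equation unsatisfiable, forcing $L(p,p_0)\not\cong -L(-p,p_0)$ for every positive integer $p$. The delicate bookkeeping will be verifying that the $a_2(K)$, $a_3(L)$, and $p$-dependent terms genuinely drop out of the \emph{sum} (unlike the difference in Theorem~\ref{thm3}), so that only $K_0$-data and $p_0$ remain; matching the constant to $96a_2(K_0)$ is the computation I would double-check against the stated root formula.
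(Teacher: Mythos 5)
Your proposal follows the paper's own route: apply~\eqref{eq:Ito} twice with $p_x=\pm p$, $q_x=1$, $p_y=p_0$, $q_y=1$, use $\lambda_w(-M)=-\lambda_w(M)$ to turn the hypothesis into $\lambda_w+\lambda_w'=0$, kill all Dedekind-symbol terms via $s(p,\pm1)=0$, and reduce to a quadratic in $p_0$ alone, namely $p_0^2-3p_0+2-24a_2(K_0)=0$, whose roots are exactly $\frac{3\pm\sqrt{96a_2(K_0)+1}}{2}$. Two points need fixing. First, a sign slip: in the \emph{sum}, the terms odd under $p\mapsto-p$ (the $a_2(K)/p$, $1/(12p)$, and $v_3(L)$ terms) \emph{cancel}; they do not ``survive with a factor of two'' as your second paragraph claims. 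Your final paragraph asserts the opposite --- and correct --- expectation, that precisely these terms drop out, which is what makes the equation independent of $p$, $a_2(K)$, and $a_3(L)$; the two paragraphs must be reconciled, and it is the final one that matches the actual computation. Second, you rightly flag the $\varsigma+\varsigma'$ term as the delicate point: the paper takes $(\varsigma,\varsigma')=(2,0)$ or $(0,2)$, so $\varsigma+\varsigma'=2$ contributes the constant $\frac{1}{4}$, which after clearing denominators produces the $-3p_0$ term of the quadratic. Note, however, that when $p_0<0$ the signatures of $\mathrm{diag}(p,p_0)$ and $\mathrm{diag}(-p,p_0)$ are $0$ and $-2$, so $\varsigma+\varsigma'=-2$ and the quadratic becomes $p_0^2+3p_0+2-24a_2(K_0)=0$, with roots $\frac{-3\pm\sqrt{96a_2(K_0)+1}}{2}$; this case is not covered by the asserted value of $\varsigma+\varsigma'$ (in the paper's proof as well), so a careful write-up along your plan should treat the signs of $p_0$ separately.
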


\begin{proof}
Suppose that $L(p,p_0) \cong -L(-p, p_0)$, and set $\lambda_w = \lambda_w ( L(p,p_0))$, $\lambda_w' = \lambda_w ( L(- p, p_0))$. 
Then $\lambda_w' = - \lambda_w$, i.e., $\lambda_w + \lambda_w' = 0$ holds. 

We apply the formula~\eqref{eq:Ito} in two ways; apply~\eqref{eq:Ito} with $p_x = p$, $q_x=1$, $p_y=p_0$, $q_y=1$, and $p_x = -p$, $q_x = 1$, $p_y=p_0$, $q_y=1$. 
By the assumption $\ell=\lk (K,K_0) = 0$, 
the linking matrices, $A$ and $A'$, of the two manifolds $L(p,p_0)$ and $L(-p, p_0)$ are given by
\[ A = \begin{pmatrix} p & 0 \\ 0 & p_0 \end{pmatrix} \text{ and } A'=\begin{pmatrix} -p & 0 \\ 0 & p_0 \end{pmatrix}, \] 
respectively. 
Thus, their determinants, $D$ and $D'$, are given by 
\[ D = p p_0 \text{ and } D' = -p p_0 . \]
Set $\varsigma = \sigma(A)$ and $\varsigma'= \sigma(A')$. 
Then, by $p>0$ and $p_0 \ne 0$, we see that 
\[ ( \varsigma , \varsigma' ) = ( 2 , 0) \text{ or } (0,-2) . \]
Thus, it follows that 
\[ \varsigma + \varsigma' = 2 \sign(p_0) .\]
Then, by using the previous calculations, we obtain 
\begin{align*}
    \begin{split}
    \lambda_w + \lambda_w'
    &= 2 \left( \frac{\varsigma + \varsigma'}{8} 
     +\frac{2 a_2 (K_0) }{p_0} - \frac{1}{6 p_0} 
      + \frac{1}{24} \left( S\left( \frac{p}{1} \right) + S\left( \frac{p}{-1} \right) + 2 S\left( \frac{p}{1} \right) - 2 p_0 \right) \right)\\
    &= 2 \left( \frac{\sign(p_0)}{4} 
     +\frac{2 a_2 (K_0) }{p_0} - \frac{1}{6 p_0} - \frac{p_0}{12}
     \right). 
    \end{split}
\end{align*}
Thus, combining the condition $\lambda_w + \lambda_w' =0$, we conclude that
\[ p_0^2 - 3 \sign(p_0) p_0  + 2 - 24 a_2(K_0) =0. \]
This implies that $L(p,p_0) \not\cong - L(-p, p_0)$ for any positive integer $p$ if $p_0 \ne \dfrac{3\sign(p_0) \pm \sqrt{96a_2(K_0)+1}}{2}$ holds for a non-zero integer $p_0$. 
\end{proof}

\begin{proof}[Proof of Theorem~\ref{thm:chirally}]
Let $K$ be a null-homologous knot in a rational homology sphere $\Sigma$ obtained by $p_0$-Dehn surgery on a knot $K_0$ in $S^3$  for a non-zero integer $p_0$. 
By the similar argument as in Proof of Theorem~\ref{thm:rational}, we can apply Theorem~\ref{thm6}. 
Then we see that $K$ admits no integral chirally cosmetic surgeries if $p_0 \ne \dfrac{3 \sign(p_0) \pm \sqrt{96a_2(K_0)+1}}{2}$ holds. 
% Let $K$ be a null-homologous knot in a rational homology sphere $\Sigma$ obtained by $p_0$-Dehn surgery on a knot $K_0$ in $S^3$  for a non-zero integer $p_0$. 
% Since $K$ is null-homologous, by isotoping $K$ in $\Sigma$, we may assume that $K$ bounds an orientable compact surface in the exterior of the dual knot of $K_0$ in $\Sigma$. 
% It means that this $K$ is regarded as a knot in $S^3$ such that $\lk(K,K_0)=0$. 
% Also, if $K$ admits integral chirally cosmetic surgeries, then the surgery slopes must be $\pm p$ with some positive integer $p$ for homological reasons. 
% Thus, applying Theorem~\ref{thm6} to this $K$ and $K_0$, we see that $K$ admits no integral chirally cosmetic surgeries if $p_0 \ne \dfrac{3 \sign(p_0) \pm \sqrt{96a_2(K_0)+1}}{2}$ holds. 
\end{proof}

\section{Example}\label{sec:example}

In this section, based on the arguments used in the proof of Theorem~\ref{thm3}, we give infinitely many knots admitting no integral purely cosmetic surgery pairs in certain small Seifert fibered 3-manifolds. 

Let $L(a,b) = P (2a +1, 2b, 2b)$ be the two-component pretzel link in $S^3$ as shown in Figure~\ref{fig:pretzel}. 
Here, $a, b$ are integers with $a>0$ and $b \ne 0$. 
Note that $L(a,b)$ consists of the torus knot of type $(2,2a+1)$, denoted by $T(2,2a+1)$, and the unknot denoted by $U$. 
One can easily see that the linking number $\ell$ is zero. 
\begin{figure}[htb!]
    \centering
    \begin{overpic}[width=0.85\linewidth]{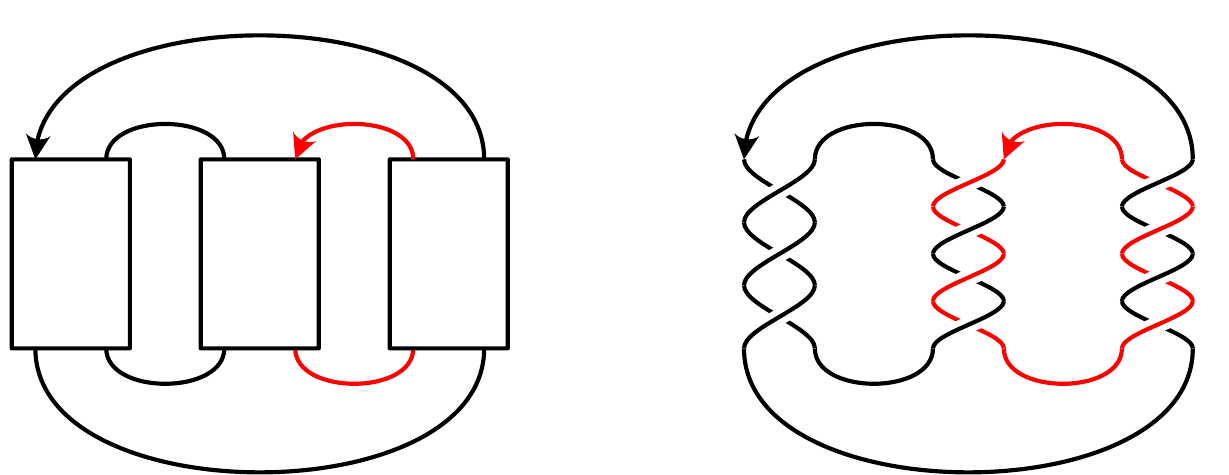}
    \put(.7,17){$2a+1$}
    \put(19.5,17){$2b$}
    \put(35.7,17){$2b$}
    \put(38,32){$K_y$}
    \put(27.5,29.5){\textcolor{red}{$K_x$}}
    \end{overpic}
    \caption{$P(2a+1, 2b, 2b)$ and $P(3,4,4)$}
    \label{fig:pretzel}
\end{figure}

Set $K_x = U$, $K_y = T(2,2a+1)$, and $L(a,b) = K_x \cup K_y$. 
Applying $p_0/q_0$-surgery on $K_y$ with $p_0/q_0 \ne 1/0, 0/1$, we have a rational homology sphere $\Sigma$ containing a knot $K$ as the image of $K_x$ after surgery. 

We show that the knot $K$ admits no integral purely cosmetic surgeries if $\dfrac{q_0}{p_0}<0$ and $b$ is sufficiently larger than $\left| \dfrac{q_0}{p_0} \right|$.  
We note that 
$\Sigma$ is an L-space if and only if $p_0/q_0 \ge 2a -1$ \cite[Example 1.6]{MotegiTeragaito14}. 
Thus, if $p_0 <0$, then $\Sigma$ is not an L-space. 
Suppose that $p/1$-surgery and $-p/1$-surgery on $K$ yield homeomorphic manifolds; that is, $K(p/1) \cong K(-p/1)$. 
By Equation~\eqref{eq:RHS1}, we have 
\begin{equation}\label{eq:pretzel1}
p^2 - \frac{3(\varsigma - \varsigma')}{2}\cdot p  + \left( 2 - 24 a_2(K_x) + \frac{24  q_0}{p_0} \,a_3(L(a,b)) \right) =0 
\end{equation}
In this case, $a_2(K_x) = 0$ since $K_x$ is trivial, and using the skein relation of the Conway polynomial, we can obtain 
\begin{equation}\label{eq:a3}
    a_3\left(L(a,b)\right) = \dfrac{-b}{6}\left( 2b^2 + 6ab + 3b +1 \right) .
\end{equation}
A precise calculation deriving Equation~\eqref{eq:a3} will be given later. 
Therefore Equation~\eqref{eq:pretzel1} is equivalent to the following quadratic equation in $p$: 
\begin{equation}\label{eq:pretzel2}
    p^2 - \dfrac{3}{2}( \varsigma - \varsigma' ) p + 2 - 4b\left( 2b^2 + 6ab + 3b +1 \right) \dfrac{q_0}{p_0} = 0. 
\end{equation}
The discriminant of LHS of Equation~\eqref{eq:pretzel2} is 
\begin{equation}\label{eq:discriminant}
    \dfrac{9(\varsigma - \varsigma')^2}{4} - 8 + 16b(2b^2 + 6ab + 3b +1) \dfrac{q_0}{p_0}. 
\end{equation}
If $\dfrac{q_0}{p_0}<0$ and $b$ is sufficiently larger than $\left| \dfrac{q_0}{p_0} \right|$, then the value of \eqref{eq:discriminant} is negative, and thus Equation~~\eqref{eq:pretzel2} has no real solution. 

Here we present a calculation deriving Equation~\eqref{eq:a3}. 
Recall the skein relation of the Conway polynomial: $\nabla_{+}(z) - \nabla_{-}(z) = z \nabla_{0}(z)$. 
Choose the orientation of $L(a,b)$ as shown in Figure~\ref{fig:pretzel}. 
Assume that $b >0$. 
%One of the twist boxes labelled $2b$ consists of positive crossings, and the other one consists of negative crossings. 
Applying the skein relation to the twist box labeled $2b$, where the strands are anti-parallel (negative crossings in this case), we have 
\begin{align*}
    \nabla_{L(a,b)} =& \ \nabla_{P(2a+1,2b,2b)} \\ 
    =& \ \nabla_{P(2a+1,2(b-1),2b)} - z \nabla_{T(2,2a+2b+1)} \\ 
    =& \ \cdots \\ 
    =& \ \nabla_{P(2a+1,0,2b)} - b z \nabla_{T(2,2a+2b+1)} \\ 
    =& \ \nabla_{T(2,2a+1)} \nabla_{T(2,2b)} - b z \nabla_{T(2,2a+2b+1)} \\ 
    =& \ \left(1 + a_2(T(2,2a+1)) z^2 + \cdots \right) \left(a_1(T(2,2b)) z + a_3(T(2,2b)) z^3 + \cdots \right) \\ 
    &- b z \left(1 + a_2(T(2,2a+2b+1) z^2 + \cdots\right).  
\end{align*}
Note that $a_1(T(2,2b)) = \lk \left( T(2,2b) \right) = b$. 
Therefore, we have 
\[ a_3 (L(a,b) ) = a_3(T(2,2b))+ b a_2( T(2,2a+1) ) - b a_2( T(2,2a+2b+1) ). \]
On the other hand, we can easily show that 
\[ a_2(T(2,2k+1) ) = k(k+1)/2, \quad a_3(T(2,2k)) = (k-1)k(k+1)/6. \]
Then we obtain Equation~\eqref{eq:a3}. 
A similar calculation holds for the case where $b < 0$ or the orientation of $L(a,b)$ is changed.

\bigskip 
\thanks{Acknowledgements.} 
The authors would like to thank the anonymous referee for a careful reading and useful suggestions.

\providecommand{\bysame}{\leavevmode\hbox to3em{\hrulefill}\thinspace}
\providecommand{\MR}{\relax\ifhmode\unskip\space\fi MR }
% \MRhref is called by the amsart/book/proc definition of \MR.
\providecommand{\MRhref}[2]{%
  \href{http://www.ams.org/mathscinet-getitem?mr=#1}{#2}
}

\end{document}